\begin{document}
%     +-----------------------------------------------------------+
%     |    Sectionwise numbering of formulas, figures, theorems   |
%     +-----------------------------------------------------------+
%\def\sectionnew#1{\section{#1}\setcounter{equation}{0}}
%\renewcommand{\theequation}{\thesection.\arabic{equation}}
%\renewcommand{\thefigure}{\thesection.\arabic{figure}}
\renewenvironment{proof}{ \noindent {\bfseries Proof.}}{qed}
\newtheorem{defin}{Definition}
\newtheorem{theorem}{Theorem}
\newtheorem{remark}{Remark}
\newtheorem{proposition}{Proposition}
%  +---------------------------------------------------------------------+
%  |  In this way I can define a new numbering of theorems, lemmas, etc. |
%  |   and I need to write them as: \begin{assump}                       |
%  |                                Que rollo                            |
%  |                                \end{assump}                         |
%  |  \newtheorem{assump}{Assumption}                                    |
%  |  \renewcommand*{\theassump}{\Alph{assump}}                          |
%  +---------------------------------------------------------------------+
\newtheorem{lemma}{Lemma}
\newtheorem{cor}{Corollary}
\def\begproof{\noindent{\bf Proof: }}
\def\endproof{\quad\vrule height4pt width4pt depth0pt \medskip}
%     +------------------+
%     |     Operators    |
%     +------------------+
\def\div{\nabla\cdot}
\def\rot{\nabla\times}
\def\sign{{\rm sign}}
\def\arsinh{{\rm arsinh}}
\def\arcosh{{\rm arcosh}}
\def\diag{{\rm diag}}
\def\const{{\rm const}}
%     +-----------------------------------------+
%     |      Redefinition of greek letters      |
%     +-----------------------------------------+
\def\eps{\varepsilon}
\def\phi{\varphi}
\def\theta{\vartheta}
\newcommand{\Bchi}{\mbox{$\hspace{0.12em}\shortmid\hspace{-0.62em}\chi$}}
%     +-------------------------------------------+
%     |      The sets C, R, Q, M, N, P and Z      |
%     +-------------------------------------------+
\def\C{\hbox{\rlap{\kern.24em\raise.1ex\hbox
      {\vrule height1.3ex width.9pt}}C}}
\def\R{\mathbb{R}}
\def\P{\hbox{\rlap{I}\kern.16em P}}
\def\Q{\hbox{\rlap{\kern.24em\raise.1ex\hbox
      {\vrule height1.3ex width.9pt}}Q}}
\def\M{\hbox{\rlap{I}\kern.16em\rlap{I}M}}
\def\N{\hbox{\rlap{I}\kern.16em\rlap{I}N}}
\def\Z{\hbox{\rlap{Z}\kern.20em Z}}
\def\K{\mathcal{K}}
%    +----------------+
%    |    Equations   |
%    +----------------+
\def\({\begin{eqnarray}}
\def\){\end{eqnarray}}
\def\[{\begin{eqnarray*}}
\def\]{\end{eqnarray*}}
%   +---------------------------+
%   |    Partial derivatives    |
%   +---------------------------+
\def\part#1#2{{\partial #1\over\partial #2}}
\def\partk#1#2#3{{\partial^#3 #1\over\partial #2^#3}}
\def\mat#1{{D #1\over Dt}}
\def\dt{\, \partial_t}
%   +---------------------------+
%   |    Special symbols        |
%   +---------------------------+
\def\as{_*}
\def\d{\, \text{d}}
\def\e{ \text{e}}
\def\D{\mathcal{D}}
%   +-----------------------+
%   |    Miscellaneous      |
%   +-----------------------+
\def\pmb#1{\setbox0=\hbox{$#1$}
  \kern-.025em\copy0\kern-\wd0
  \kern-.05em\copy0\kern-\wd0
  \kern-.025em\raise.0433em\box0 }
\def\bar{\overline}
\def\lbar{\underline}
\def\fref#1{(\ref{#1})}

\begin{center}
%    +-------------+
%    +    TITLE    +
%    ---------------
{\LARGE Fractional diffusion limit of a linear kinetic equation in a bounded domain}
\bigskip

%    -----------------
%    +    AUTHORS    +
%    -----------------
{\large P. Aceves-S\'anchez}\footnote{Fakult\"at f\"ur Mathematik, Universit\"at Wien.} and
{\large C. Schmeiser}$^1$
\end{center}
\vskip 1cm

%    ------------------
%    +    ABSTRACT    +
%    ------------------
\noindent{\bf Abstract.} A version of fractional diffusion on bounded domains, subject to 'homogeneous Dirichlet
boundary conditions' is derived from a kinetic transport model with homogeneous inflow boundary conditions.
For nonconvex domains, the result differs from standard formulations. It can be interpreted as the forward
Kolmogorow equation of a stochastic process with jumps along straight lines, remaining inside the domain.
\vskip 1cm

%    -------------------
%    +    KEY WORDS    +
%    -------------------
\noindent{\bf Key words:} Kinetic transport equations, linear Boltzmann operator, anomalous diffusion limit, fractional diffusion, asymptotic analysis 
\medskip

\noindent{\bf AMS subject classification:} 76P05, 35B40, 26A33
\vskip 1cm

%    -------------------------
%    +    ACKNOWLEDGMENTS    +
%    -------------------------
\noindent{\bf Acknowledgment:} This work has been supported by the PhD program {\em Dissipation and
Dispersion in Nonlinear PDEs}, supported by the Austrian Science Fund,
grant no. W1245, and by the Vienna Science and Technology Fund,
grant no. LS13-029. P.A.S also acknowledges support from the Consejo
Nacional de Ciencia y Tecnologia of Mexico.
%\vfil\eject

%    -------------------------
%    +    TABLE OF CONTENTS  +
%    -------------------------

%\tableofcontents

%    ----------------------
%    +    INTRODUCTION    +
%    ----------------------

\section{Introduction}

This work is an extension to bounded domains of earlier efforts \cite{ben2011anomalous,Mellet2010,MR2763032} to derive fractional diffusion equations
from kinetic transport models. This raises the issue of the inclusion of boundary effects, which can, however,
not be reduced to boundary conditions since fractional diffusion is a nonlocal process. Our main result is the
derivation of a new way of realizing 'homogeneous Dirichlet boundary conditions', coinciding on convex domains 
with an already established model, see e.g. \cite{Fukushima+2010}.

Let $\Omega\subset\R^d$ denote a bounded domain with smooth boundary. We shall study the asymptotic 
behavior as $\varepsilon > 0$ tends to zero of the kinetic relaxation model
\begin{align}
\varepsilon^{ \alpha} \partial_t f_\varepsilon + \varepsilon v \cdot \nabla_x f_\varepsilon = Q ( f_\eps) :=
\int_{ \R^d} M f'_\varepsilon - M' f_\varepsilon \d v'  \,,\label{maineq}
\end{align} 
with $f_\varepsilon = f_\varepsilon(x,v,t)$, $(x,v,t)\in \Omega\times\R^d\times [0,\infty)$
(where the superscript $^\prime$ denotes evaluation at $v'$), subject to zero inflow boundary conditions and well prepared initial data:
\begin{align}
f _\varepsilon( x, v, t) & = 0 \hspace{4cm} \text{ for } (x,v)\in\Gamma^-\,, \quad t>0 \,,\label{mainboundary}\\
f_\varepsilon ( x, v, 0) & = f^{in} ( x, v) := \rho^{in} ( x) M ( v) \qquad \text{for } (x,v)\in \Omega \,, \label{maininitial}
\end{align} 
with $\Gamma^\pm = \{ ( x, v) | \, \, x \in \partial \Omega, \, \, \mbox{sign}(v \cdot \nu ( x)) = \pm 1 \}$,
where $\nu$ denotes the unit outward normal along $\partial\Omega$. We assume a 'fat-tailed' equilibrium
distribution $M$, satisfying
\begin{eqnarray}
 &&M ( v) = 1/| v|^{ d + \alpha} \quad \text{ for } |v| \geq 1 \,, \mbox{ with } 0 < \alpha < 2 \,, \label{eq:eqdecay} \\
\label{eq:unbiased}
 &&M( v) > 0, \qquad M( v) = M( -v) \qquad \mbox{ for all } v \in \R^d \,, \\
\label{eq:mass}
&& M \in L^\infty ( \R^d) \,,\qquad \mbox{ and } \qquad \int_{ \R^d} M( v) \d v = 1 \,.
\end{eqnarray}
Note that these assumptions imply that $M$ does not have finite second order moments.

The translation of homogeneous Dirichlet boundary conditions to fractional diffusion induce a certain behaviour of
solutions close to the boundary. The domain of the fractional diffusion operator, we shall derive, contains test 
functions in
\begin{equation}\label{def:testfunc}
  \mathcal{D}_\Omega := \{\varphi\in C_0^\infty(\overline\Omega \times [0,\infty)):\, \delta(x)^{-2}\varphi(x,t) \mbox{ bounded}\} \,,
\end{equation}
where $\delta ( x) := \mbox{dist} ( x, \partial \Omega)$ denotes
the distance of a point $x \in \Omega$ to the boundary.

A convenient functional analytic setting for the main result of this paper is the $L^2$-space 
$L^2_{ M^{ -1}} ( \Omega \times \R^d)$ of functions of $(x,v)$ with weight $1/M(v)$.

\begin{theorem}\label{maintheo}
 Let $\rho^{in}\in L^2(\Omega)$, and let $f_\varepsilon$ be the solution of \eqref{maineq}--\eqref{maininitial}.
 Then, for any $T > 0$, there exists $\rho \in L^\infty ( 0, T ; L^2 ( \Omega))$ such that
 $f_\varepsilon(x,v,t)\to \rho(x,t)M(v)$ as $\varepsilon \rightarrow 0$, in $L^\infty ( 0, T; L^2_{ M^{-1}} ( \Omega \times \R^{ d} ))$ weak-$\star$, and
 $\rho$ satisfies 
 \begin{align}
  \int_\Omega \rho^{in} \varphi(t=0)dx + \int_0^\infty \int_\Omega \rho\,\partial_t \varphi \,dx\,dt  = 
   \int_0^\infty \int_\Omega \rho( h_\alpha \varphi - \mathcal{L}_\alpha ( \varphi))dx\,dt ,  
    \label{mainfrac} 
 \end{align}
for all $\varphi\in\mathcal{D}_\Omega$, with
 \[
  \mathcal{L}_\alpha ( \varphi) ( x, t) = \Gamma(\alpha+1)\emph{P.V.} \int_{ \{ w \in \R^d : [ x, x + w ] \subset \Omega \}} \frac{ \varphi ( x + w, t) - \varphi ( x, t)}{ | w|^{ d + \alpha}} \d w,
 \]
and
 \begin{equation}\label{eq:hfunc}
  h_\alpha ( x) = \int_{ \R^d} \frac{ 1}{ | w|^{ d + \alpha}} \e^{ - \frac{ | x - x_0 ( x, w)|}{ | w|}} \d w \,,
 \end{equation}
where $[x,y]$, $x,y\in\R^d$, denotes the straight line segment connecting $x$ and $y$, and $x_0(x,w)$ is the point
closest to $x$ in the intersection of $\partial \Omega$ with the ray starting at $x$ in the direction $w$.
\end{theorem}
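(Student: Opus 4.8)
The plan is to follow the classical moment-method / weak-formulation strategy for fractional diffusion limits, but with careful bookkeeping of the boundary. First I would establish a priori bounds: multiplying \eqref{maineq} by $f_\varepsilon/M$ and integrating over $\Omega\times\R^d$, the boundary term has a sign (thanks to the zero inflow condition \eqref{mainboundary}), and the collision term is dissipative, which yields $f_\varepsilon$ bounded in $L^\infty(0,T;L^2_{M^{-1}}(\Omega\times\R^d))$ uniformly in $\varepsilon$, together with control of $f_\varepsilon-\rho_\varepsilon M$ where $\rho_\varepsilon=\int f_\varepsilon\,\d v$. By weak-$\star$ compactness, up to a subsequence $f_\varepsilon\rightharpoonup \rho M$, and one must check $\rho\in L^\infty(0,T;L^2(\Omega))$; uniqueness of the limit equation will upgrade this to convergence of the whole family.

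The heart of the argument is passing to the limit in the weak formulation. Given $\varphi\in\mathcal{D}_\Omega$, I would construct a test function $\psi_\varepsilon(x,v,t)$ for the kinetic equation that ``inverts'' the transport operator in an appropriate sense, i.e. something like $\psi_\varepsilon=\varphi+\varepsilon\chi_\varepsilon$, where $\chi_\varepsilon$ solves an auxiliary stationary transport problem $v\cdot\nabla_x\chi_\varepsilon = $ (remainder) with zero \emph{outflow} data on $\Gamma^+$ (the adjoint of inflow). Concretely, the natural auxiliary quantity is the solution along characteristics: for fixed $v$, integrate $\varepsilon v\cdot\nabla_x$ backward until exiting $\Omega$ at $x_0(x,-v)$ (or $x_0(x,v)$, depending on orientation), producing an exponential weight $\exp(-|x-x_0|/ (\varepsilon|v|))$ truncated by the boundary. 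Multiplying \eqref{maineq} by this test function, integrating by parts, and using \eqref{mainboundary}, all boundary contributions either vanish or collapse onto $\partial\Omega$; the bulk term, after the change of variables $w=\varepsilon v$ (which is what turns $v\in\R^d$ with the fat tail \eqref{eq:eqdecay} into the kernel $|w|^{-d-\alpha}$ with the constant $\Gamma(\alpha+1)$), converges to the right-hand side of \eqref{mainfrac}: the part of the $v$-integral that reaches back into $\Omega$ without hitting $\partial\Omega$ gives $\mathcal{L}_\alpha(\varphi)$ in principal-value sense, while the part of the integral corresponding to characteristics that \emph{would} exit the domain contributes the ``killing'' term $h_\alpha(x)\varphi(x,t)$ via \eqref{eq:hfunc}. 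The time-derivative and initial-data terms pass to the limit trivially since $\varepsilon^\alpha\to 0$ and the initial data is well prepared.

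The main obstacle is the singularity of the fractional kernel near $w=0$ combined with the geometry near $\partial\Omega$: one must show the limiting integrals are well-defined (this is exactly why the test space $\mathcal{D}_\Omega$ requires $\delta(x)^{-2}\varphi$ bounded — so that $\varphi$ vanishes quadratically at the boundary, making $\mathcal{L}_\alpha(\varphi)$ integrable against $\rho$ even for $\alpha$ close to $2$ and even where the domain of integration $\{w:[x,x+w]\subset\Omega\}$ degenerates), and one must justify the interchange of limits uniformly in $x$, including near points where $\partial\Omega$ is nonconvex and the admissible set of directions $w$ has a complicated, $x$-dependent shape. I expect the delicate estimates to be: (i) the equicontinuity / dominated-convergence bound for the truncated transport integrals near the boundary layer of width $\varepsilon$, and (ii) showing that the small-$|w|$ (equivalently large-$|v|$) part of the kernel, which carries the anomalous scaling, is unaffected to leading order by the boundary truncation for $x$ in the interior, while for $x$ near $\partial\Omega$ the quadratic vanishing of $\varphi$ absorbs the loss. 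Once the limit equation \eqref{mainfrac} is obtained, a uniqueness argument for this linear nonlocal problem (energy estimate in the appropriate weighted space, using that $\mathcal{L}_\alpha$ is, up to the zeroth-order term $h_\alpha$, a negative operator) closes the proof and removes the subsequence.
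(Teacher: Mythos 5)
Your proposal follows essentially the same route as the paper: the moment method with the adjoint stationary transport problem $\chi_\eps-\eps v\cdot\nabla_x\chi_\eps=\varphi$, $\chi_\eps=0$ on $\Gamma^+$, solved along characteristics with the boundary-truncated exponential weight, the rescaling $w=\eps v$ of the fat tail to produce the kernel $|w|^{-d-\alpha}$ with constant $\Gamma(\alpha+1)$, the splitting into the killing term $h_\alpha\varphi$ and the line-of-sight-restricted operator $\mathcal{L}_\alpha(\varphi)$, and the use of the $\delta(x)^2$-vanishing of $\varphi\in\mathcal{D}_\Omega$ to absorb the boundary layer and the small-$|w|$ singularity, which is exactly the content of the paper's key lemma. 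Two minor caveats: the time-derivative term does not pass to the limit "because $\eps^\alpha\to0$" but because $\int_{\R^d}M\,\partial_t\chi_\eps\,dv\to\partial_t\varphi$ uniformly (together with the $L^2$ bound on $\partial_t\chi_\eps$ and the $O(\eps^{\alpha/2})$ estimate on $f_\eps-\rho_\eps M$), and the concluding uniqueness step you invoke to remove the subsequence is only formal here, since the limit $\rho$ need not be an admissible test function in $\mathcal{D}_\Omega$ — the paper explicitly refrains from proving uniqueness and works along subsequences.
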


The function $h_\alpha$ is well defined by \eqref{eq:hfunc} and converges to $\infty$ when $x \to \partial \Omega$, see Proposition \ref{prop:hbound} in Section \ref{sec:disc}.

\begin{remark}
Theorem \ref{maintheo} remains true with slightly modified proofs for generalized versions of the model. For
example, \eqref{eq:eqdecay} may be replaced by the more general condition
\begin{equation}\label{eq:geqdecay}
M( v) \sim 1 / | v|^{ d + \alpha} \qquad \mbox{ as } | v| \to \infty.
\end{equation}
An example coming from stochastic analysis is the probability density function of an $\alpha$-stable process, see \cite{Bogdan+2007}.

\end{remark}

\begin{remark}
Another possible generalization is to permit a more general collision operator, satisfying the micro-reversibility principle: \[
Q( f) = \int_{ \R^d} [\sigma ( v , v') M( v) f ( v') - \sigma ( v' , v) M( v') f ( v)] \d v'
\]
where the cross-section $\sigma$ is symmetric, i.e. $\sigma ( v, v') = \sigma ( v', v)$,  $v, v'$ in $\R^d$, and bounded
from above and away from zero:
\[
 0 < \nu_1 \leq \sigma ( v, v') \leq \nu_2 < \infty \,.
\]
\end{remark}

The derivation of macroscopic limits from kinetic equations when the collision kernel has a Maxwellian as an equilibrium distribution is a classical
problem studied in the pioneering works \cite{Wigner1961}, \cite{Habetler+1975}, and \cite{Larsen+1974}. 
Here the essential properties of the equilibrium distribution are vanishing mean velocity and finite second order
moments. In the case where the 
equilibrium distribution is heavy-tailed, the problem was first studied for relaxation type collision operators in \cite{MR2763032}, \cite{Mellet2010} and \cite{ben2011anomalous}, from an analytical point
of view and in \cite{Jara+2009} with a probabilistic approach, obtaining as a macroscopic limit a fractional heat equation. These are results on whole space, and they have recently been extended to collision operators of
fractional Fokker-Planck type \cite{Cesbron+2012} and to the derivation of fractional diffusion with drift \cite{AcevesSanchez+2016,AcevesSancheza+2016,AcevesSanchezc+2016}. 
The proofs of most of these results are based on the moment method introduced in \cite{Mellet2010},
which will also be used here.

To find an appropriate definition of fractional diffusion in a bounded domain is not obvious since it describes the
probability distribution of a jump process. The formulation of appropriate models as macroscopic limits of
kinetic equations is the subject of this work and of the very recent contribution \cite{Ludovic2016}, where the problem
of deriving a fractional heat equation from a kinetic fractional-Fokker-Planck equation is tackled with zero inflow and specular reflection boundary conditions, where
the spatial domain is a circle. The main differences between this work and \cite{Ludovic2016} are that we use
a relaxation type collision operator, we only consider inflow boundary conditions, but we permit general,
in particular nonconvex, position domains. 

There are several equivalent definitions of the fractional Laplacian in the whole domain (see \cite{Kwa2015}), however,  for bounded domains there are
different definitions, depending on the details of the underlying stochastic process. For instance, if we consider the stochastic process consisting of a fractional
Brownian motion with an $\alpha / 2-$stable subordinator and killed upon leaving the domain it has as infinitesimal generator the restricted fractional
Laplacian (see \cite{Fukushima+2010})
\begin{equation}\label{def:fracLap}
  -( - \Delta |_\Omega)^{ \alpha / 2} \varphi ( x) := c_{ d, \alpha} \mbox{ P.V.} \int_{ \R^d} 
  \frac{ \varphi(y){\rm\bf 1}_\Omega(y) - \varphi(x)}{ | x - y|^{ d + \alpha}} \d y \,,\qquad c_{d,\alpha}>0\,.
\end{equation}
This operator has also been derived in \cite{Ludovic2016} as macroscopic limit of a kinetic equation in a circle, 
subject to zero inflow boundary conditions. The macroscopic operator of Theorem \ref{maintheo} can be written 
in the similar form,
\begin{equation}\label{def:our-fracLap}
  -h_\alpha \varphi + \mathcal{L}_\alpha(\varphi) = \Gamma(\alpha+1)\,\mbox{P.V.} 
    \int_{ \R^d} \frac{ \varphi(y){\rm\bf 1}_{\mathcal{S}_\Omega(x)}(y) - \varphi(x)}{ | x - y|^{ d + \alpha}} dy \,,
\end{equation}
where $\mathcal{S}_\Omega(x)$ denotes the biggest star-shaped subdomain of $\Omega$ with center in $x$.
Obviously, \eqref{def:fracLap} and \eqref{def:our-fracLap} coincide for convex $\Omega$ (the situation of 
\cite{Ludovic2016}). The difference in the stochastic process interpretations of \eqref{def:fracLap} and 
\eqref{def:our-fracLap} is that in the latter jumps are only permitted along straight lines, which do not leave the
domain.

For completeness we also mention the spectral fractional Laplacian defined as follows: The operator $-\Delta$ 
subject to homogeneous Dirichlet boundary conditions along $\partial\Omega$ has positive eigenvalues $ 0 < \lambda_1 \leq \lambda_2 \ldots$ with corresponding normalized eigenfunctions $\{ e_k \}_{ k \geq 1}$. 
The spectral fractional Laplacian (subject to homogeneous Dirichlet boundary conditions) is defined by
\begin{equation}\label{def:specfrac}
 ( - \Delta_\Omega)^{\alpha / 2} \varphi ( x) := \sum_{ i=1}^{ \infty} \lambda_i^{ \alpha / 2} e_i(x)
  \int_{\Omega} e_i(y) \varphi(y)dy\,.
\end{equation}
It can also be interpreted as generating a stochastic process (see \cite{Chen+1998}). A representation formula
similar to \eqref{def:fracLap} and \eqref{def:our-fracLap} has been derived in \cite{Song+2003}:
\[
( - \Delta_\Omega)^{ \alpha / 2} \varphi ( x) = c_{ d, \alpha} \mbox{ P.V.} \int_\Omega [ \varphi( x) - \varphi ( y)] J ( x, y) \d y + c_{ d, \alpha} \, \kappa ( x) \varphi ( x), \quad \mbox{ for } x \in \Omega 
\]
where the functions $J$ and $\kappa$ and the constant $c_{ d, \alpha}$ satisfy (with positive  constants $C_1$, $C_2$ and $C_3$)
\[
C_1 \delta ( x) \delta ( y) \leq J ( x, y)  \leq C_2 \min \bigg( \frac{ 1}{ | x - y|^{ d + \alpha}}, \frac{ \delta ( x) \delta ( y)}{ | x - y|^{ d + 2 + \alpha}} \bigg), 
\]
and
\[
C_3^{ -1} \delta^{ -\alpha} ( x)  \leq \kappa ( x)  \leq C_3 \delta ^{ - \alpha} ( x). 
\]
In \cite{Servadei+2014} it is proven that the two operators $( - \Delta_\Omega)^{\alpha / 2}$ and $( - \Delta |_\Omega)^{ \alpha / 2}$ are different since,
for instance, the eigenfunctions of the former are smooth up to the boundary whereas the eigenfunctions of the latter are no better than
H\"older continuous up to the boundary. In recent years fractional Laplace operators have been extensively used since they seem to be more suitable 
for the description of phenomena such as contaminants propagating in water \cite{Benson+2001}, plasma physics \cite{Del+2005}, among many  others (see
\cite{Vazquez2014857} and \cite{DiNezza2012521}). However, there is some literature where for the fractional Laplacian on bounded domains the definitions \eqref{def:fracLap} and \eqref{def:specfrac} are used interchangeably, thus leading to false results.

\section{Uniform estimates and modified test functions}

It is a standard result of kinetic theory that the initial-boundary value problem \eqref{maineq}--\eqref{maininitial} with an
equilibrium distribution $M$ satisfying \eqref{eq:eqdecay}--\eqref{eq:mass} and an initial position density 
$\rho^{in}\in L^1(dx)$ has a unique solution, which is nonnegative, if the same holds for $\rho^{in}$ (see, e.g. \cite{MR1295030}, Chapter XXI). This will be assumed
in the following, where we always denote by $dx$, $dv$, and $dt$ the Lebesgue measures on $\Omega$, $\R^d$, and, respectively,
$(0,\infty)$. We start with standard estimates:

\begin{lemma}\label{lem:uniform}
Let $\rho^{in} \in L^2_+(dx)$. Then the solution $f_\eps$ of \eqref{maineq}--\eqref{maininitial}
satisfies 
$$
  f_\eps\in L^\infty( dt,\,L^2_+(dx\,dv/M)) \,\qquad \mbox{uniformly as } \eps\to 0 \,,
$$ 
and, with $\rho_\eps := \rho_{f_\eps}$,
$$
   f_\eps - \rho_\eps M = O(\eps^{\alpha/2}) \qquad\mbox{in } L^2(dx\,dv\,dt/M) \,,\qquad\mbox{as } \eps\to 0 \,.
$$
\end{lemma}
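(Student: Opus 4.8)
The proof relies on the standard energy/entropy dissipation structure of the linear Boltzmann operator $Q$. First I would multiply \eqref{maineq} by $f_\eps/M$ and integrate over $\Omega\times\R^d$. The transport term $\eps v\cdot\nabla_x f_\eps$ integrates to a boundary term $\tfrac{\eps}{2}\int_{\partial\Omega\times\R^d}(v\cdot\nu)|f_\eps|^2/M$, which by the inflow condition \eqref{mainboundary} ($f_\eps=0$ on $\Gamma^-$, i.e. where $v\cdot\nu<0$) has a definite sign: it equals $\tfrac{\eps}{2}\int_{\Gamma^+}(v\cdot\nu)|f_\eps|^2/M\ge 0$. Hence it can be dropped from the inequality. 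The collision term yields the dissipation $-\int Q(f_\eps)f_\eps/M\,dx\,dv$; using micro-reversibility and the symmetry $M(v)=M(-v)$, the classical computation gives
\[
  -\int_{\Omega\times\R^d}\frac{Q(f_\eps)f_\eps}{M}\,dx\,dv
   = \frac12\int_{\Omega\times\R^d\times\R^d} MM'\left(\frac{f_\eps}{M}-\frac{f_\eps'}{M'}\right)^2 dx\,dv\,dv' \;=:\; D(f_\eps)\ge 0 .
\]
Thus $\tfrac{\eps^\alpha}{2}\tfrac{d}{dt}\int |f_\eps|^2/M \le -D(f_\eps)\le 0$, which after integration in $t$ gives the first claim: $\|f_\eps(t)\|_{L^2(dx\,dv/M)}\le\|f^{in}\|_{L^2(dx\,dv/M)}$ uniformly in $t$ and $\eps$. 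Here one checks $\|f^{in}\|_{L^2_{M^{-1}}}^2=\int(\rho^{in})^2\int M\,dv=\|\rho^{in}\|_{L^2}^2<\infty$ by \eqref{eq:mass}, so the bound is finite. Nonnegativity is quoted from the cited well-posedness theory.

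For the second claim I would integrate the differential inequality in time over $(0,T)$ and also use that $\int |f_\eps(T)|^2/M\ge 0$ to obtain
\[
  \eps^\alpha\int_0^T D(f_\eps)\,dt \;\le\; \frac{\eps^\alpha}{2}\,\|f^{in}\|_{L^2_{M^{-1}}}^2 ,
\]
so $\int_0^T D(f_\eps)\,dt = O(1)$. It remains to relate $D(f_\eps)$ to $\|f_\eps-\rho_\eps M\|^2$. Writing $g_\eps:=f_\eps/M-\rho_\eps$ (note $\int M g_\eps\,dv=0$ by definition of $\rho_\eps=\rho_{f_\eps}=\int f_\eps\,dv$), one has
\[
  D(f_\eps)=\frac12\int MM'(g_\eps-g_\eps')^2\,dx\,dv\,dv' = \int M\,g_\eps^2\,dx\,dv - \int\Big(\int M g_\eps\,dv\Big)^2 dx = \int M g_\eps^2\,dx\,dv ,
\]
using $\int M\,dv=1$ and the vanishing of the mean. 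Therefore $D(f_\eps)=\int (f_\eps-\rho_\eps M)^2/M\,dx\,dv=\|f_\eps-\rho_\eps M\|^2_{L^2(dx\,dv/M)}$ exactly, and the time-integrated bound gives $\|f_\eps-\rho_\eps M\|_{L^2(dx\,dv\,dt/M)}^2\le \tfrac12\|\rho^{in}\|_{L^2}^2\,\eps^\alpha$, i.e. $f_\eps-\rho_\eps M=O(\eps^{\alpha/2})$ as claimed.

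The only genuine subtlety — the main thing to be careful about rather than a deep obstacle — is the boundary term from the transport operator: one must justify the integration by parts rigorously (trace regularity of $f_\eps$, integrability of $(v\cdot\nu)|f_\eps|^2/M$ on $\Gamma^+$), and verify its sign using precisely the inflow condition \eqref{mainboundary}. This is where the geometry of $\Omega$ and the homogeneous inflow data enter; everything else is the textbook $H$-theorem computation. If full trace control is awkward, one can instead work with a regularized problem or test against $f_\eps/M$ truncated away from $\partial\Omega$ and pass to the limit, but the sign of the boundary term makes this harmless either way.
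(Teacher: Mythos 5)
Your proposal is correct and follows essentially the same route as the paper: the weighted energy estimate obtained by multiplying \eqref{maineq} by $f_\eps/M$, discarding the nonnegative outflow boundary term thanks to \eqref{mainboundary}, and integrating in time. The only difference is that you spell out the classical computation identifying the collision dissipation with $\|f_\eps-\rho_\eps M\|^2_{L^2(dx\,dv/M)}$, which the paper simply quotes as a well-known fact (and note that this identity needs only the symmetry of the kernel, not $M(v)=M(-v)$).
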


\begin{proof}
Multiplication of \eqref{maineq} by $f_\eps/M$, integration with respect to $x$ and $v$, the divergence 
theorem, and the boundary condition \eqref{mainboundary} yield
\begin{eqnarray}
 \frac{ \varepsilon^\alpha}{2} \frac{d}{d t} \int_\Omega \int_{ \R^d} \frac{ f^2_\eps}{M} dv\,dx + 
 \eps \int_{\Gamma^+} v \cdot \nu  \frac{ f_\eps^2}{2 M} dv\,dx 
  &=& \int_{ \Omega} \int_{ \R^d} Q(f_\eps) \frac{ f_\eps}{M} dv\,dx \nonumber\\
  &=&  - \| f _\eps- \rho_\eps M \|^2_{ L^2(dx\,dv/M)}  \,,\label{eq:weakform}
\end{eqnarray}
where the second equality is a well known fact and the result of a straightforward computation
(see, e.g. \cite{MR1803225}). The nonnegativity of the second term and an integration with respect to $t$ over $(0,T)$ give
\[
 \frac{ \eps^\alpha}{2} \|f_\eps(\cdot,\cdot,T)\|_{L^2(dx\,dv/M)}^2 + \int_0^T \|f_\eps - \rho_\eps M\|_{L^2(dx\,dv/M)}^2 dt 
   \leq \frac{\eps^\alpha}{2} \|\rho^{in}\|_{L^2(dx)}^2 \,,
\]
completing the proof.
\end{proof}

For the proof of Theorem \ref{maintheo} we employ the moment method introduced in \cite{Mellet2010}, which relies on
test functions solving a suitably chosen adjoint problem. For given $\varphi\in\mathcal{D}_\Omega$ 
the function $\chi_\eps(x,v,t)$ is the solution of the stationary kinetic equation
\begin{equation}\label{auxiliaryeq}
 \chi_\eps - \eps v \cdot \nabla_x \chi_\eps = \varphi \,,
\end{equation}
subject to the inflow boundary condition
\begin{equation} \label{auxiliarybc}
  \chi_\eps= 0 \qquad\mbox{on } \Gamma^+ \,.
\end{equation}
Note that the left hand side of \eqref{auxiliaryeq} is an adjoint version of a part of \eqref{maineq}, where only the loss term of the
collision operator and the transport operator have been kept.

We can readily solve \eqref{auxiliaryeq}, \eqref{auxiliarybc} via the method of characteristics, obtaining
\begin{equation}\label{auxiliarysol}
\chi_\eps(x,v,t) = \int_0^{ r ( x, v) / \eps} \e^{-s} \varphi ( x + \eps s v , t) ds \,, \qquad
\mbox{where}\quad r ( x, v ) = \frac{| x - x_0 ( x, v)|}{|v|} \,,
\end{equation} 
and $x_0 ( x, v)$ is the point closest to $x$ in the intersection of $\partial \Omega$ 
and the ray starting at $x$ with direction $v$. In the following a different representation will be convenient:
\begin{equation}\label{auxiliarysol1}
 \chi_\eps(x,v,t) = \varphi(x,t) \left( 1 - \e^{ -r ( x, v)/\eps} \right) 
 + \int_0^{ r( x, v)/\eps} \e^{ -s} [ \varphi ( x + \eps sv, t) - \varphi ( x, t)] ds \,.
\end{equation}
This already shows the main difference to the whole space situation \cite{Mellet2010}, which is the boundary layer
correction in the parenthesis on the right hand side of \eqref{auxiliarysol1}. 

In the following we shall need a uniform boundedness result.

\begin{lemma} \label{lem:phi}
 Let $\varphi\in\mathcal{D}_\Omega$ and let $\chi_\eps$ be given by \eqref{auxiliarysol}. Then
 $$
   \|\chi_\eps\|_{L^2(M\,dx\,dv\,dt)} \le \|\varphi\|_{L^2(dx\,dt)} \,,\qquad
   \|\partial_t\chi_\eps\|_{L^2(M\,dx\,dv\,dt)} \le \|\partial_t\varphi\|_{L^2(dx\,dt)} \,.
 $$
 \end{lemma}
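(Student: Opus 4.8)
The plan is to bound $\chi_\eps$ pointwise (in $x,v$) by a suitable average of $\varphi$, and then deduce the $L^2$ estimates by Jensen's inequality and Fubini. Starting from the representation \eqref{auxiliarysol}, namely $\chi_\eps(x,v,t) = \int_0^{r(x,v)/\eps} \e^{-s}\varphi(x+\eps s v,t)\,ds$, I note that $\int_0^{\infty}\e^{-s}\,ds = 1$, so $\e^{-s}\,ds$ restricted to $[0,r(x,v)/\eps]$ is a sub-probability measure. By Jensen's inequality applied to the convex function $u\mapsto u^2$ (after renormalizing, or simply using Cauchy--Schwarz against $\e^{-s/2}\cdot\e^{-s/2}$),
\[
  \chi_\eps(x,v,t)^2 \le \left(\int_0^\infty \e^{-s}\,ds\right)\left(\int_0^{r(x,v)/\eps}\e^{-s}\,\varphi(x+\eps s v,t)^2\,ds\right) \le \int_0^{r(x,v)/\eps}\e^{-s}\,\varphi(x+\eps s v,t)^2\,ds \,.
\]

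Next I would integrate this bound against $M(v)\,dv\,dx\,dt$ and exchange the order of integration, bringing the $s$-integral outside:
\[
  \|\chi_\eps\|_{L^2(M\,dx\,dv\,dt)}^2 \le \int_0^\infty \e^{-s}\left(\int_0^\infty\!\!\int_{\R^d}\!\!\int_\Omega M(v)\,\varphi(x+\eps s v,t)^2\,\mathbbm{1}_{\{s< r(x,v)/\eps\}}\,dx\,dv\,dt\right)ds \,.
\]
The indicator $\mathbbm{1}_{\{\eps s < r(x,v)\}} = \mathbbm{1}_{\{[x,\,x+\eps s v]\subset\Omega\}}$ guarantees that the translated point $y := x+\eps s v$ still lies in $\Omega$, so for each fixed $s$ and $v$ the change of variables $x\mapsto y$ (with unit Jacobian) maps the admissible $x$-region into $\Omega$; this yields $\int_\Omega \varphi(x+\eps s v,t)^2\,\mathbbm{1}_{\{\cdots\}}\,dx \le \int_\Omega\varphi(y,t)^2\,dy = \|\varphi(\cdot,t)\|_{L^2(dx)}^2$. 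Then $\int_{\R^d}M(v)\,dv = 1$ by \eqref{eq:mass} and $\int_0^\infty\e^{-s}\,ds = 1$ collapse the remaining integrals, giving $\|\chi_\eps\|_{L^2(M\,dx\,dv\,dt)}^2 \le \|\varphi\|_{L^2(dx\,dt)}^2$. For the second inequality, I observe that $\partial_t$ commutes with the characteristic formula: differentiating \eqref{auxiliarysol} under the integral sign shows that $\partial_t\chi_\eps$ is exactly the solution of \eqref{auxiliaryeq}--\eqref{auxiliarybc} with $\varphi$ replaced by $\partial_t\varphi$ (note $r(x,v)$ is $t$-independent, and $\partial_t\varphi\in\mathcal{D}_\Omega$ as well), so the identical argument applies verbatim.

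The only genuinely delicate point is the change-of-variables / Fubini step: one must be careful that the region $\{(x,v,s): \eps s < r(x,v),\ x\in\Omega\}$ is measurable and that, for fixed $(s,v)$, the map $x\mapsto x+\eps s v$ sends $\{x\in\Omega: [x,x+\eps s v]\subset\Omega\}$ injectively into $\Omega$ — which is immediate since it is a translation and the segment condition in particular forces the endpoint into $\Omega$. Everything else (Jensen, $\int M = 1$, $\int \e^{-s}\,ds = 1$) is routine. I would also remark that the estimate does not even use the $\delta(x)^{-2}$-boundedness built into $\mathcal{D}_\Omega$; mere boundedness of $\varphi$ together with $\varphi\in L^2(dx\,dt)$ suffices here, the finer decay being needed only later in the moment method.
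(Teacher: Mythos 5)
Your proposal is correct, but it proves the lemma by a different route than the paper. The paper argues from the adjoint equation itself: it multiplies \eqref{auxiliaryeq} by $M\chi_\eps$, integrates in $v$, $x$, $t$, applies the divergence theorem, and uses the boundary condition \eqref{auxiliarybc} so that the surviving boundary term $-\frac{\eps}{2}\int_0^\infty\int_{\Gamma^-}\nu\cdot v\,M\chi_\eps^2\,dv\,d\sigma\,dt$ has a favorable sign (on $\Gamma^-$ one has $\nu\cdot v<0$); Cauchy--Schwarz in $v$ together with $\int_{\R^d}M\,dv=1$ then yields the bound, and the time-derivative estimate follows by differentiating \eqref{auxiliaryeq} in $t$. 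You instead work directly from the explicit characteristics representation \eqref{auxiliarysol}: Cauchy--Schwarz (or Jensen) against the sub-probability measure $\e^{-s}\,ds$ on $[0,r(x,v)/\eps]$, Fubini, and the observation that for $\eps s<r(x,v)$ the translation $x\mapsto x+\eps s v$ maps the admissible $x$-set into $\Omega$ with unit Jacobian, after which $\int M\,dv=1$ and $\int_0^\infty\e^{-s}\,ds=1$ close the estimate; your treatment of $\partial_t\chi_\eps$ (differentiating the formula, $r$ being $t$-independent) is equivalent to the paper's. Your version is more elementary and self-contained --- it avoids the divergence theorem and any implicit regularity/trace justification for $\chi_\eps^2$ on $\partial\Omega$, and it makes explicit, as you note, that the $\delta^{-2}$ weight in $\mathcal{D}_\Omega$ plays no role here. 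The paper's energy argument, on the other hand, uses only the equation and the sign structure of the boundary term, not the explicit solution formula, so it transfers unchanged to generalizations where characteristics are less convenient (e.g.\ the non-constant cross-section $\sigma$ of Remark 2) and it parallels the a priori estimate of Lemma \ref{lem:uniform}. Both proofs give the same constants; your Fubini/change-of-variables step is the only point requiring care, and you have justified it adequately.
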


\begin{proof}
Multiplication of \eqref{auxiliaryeq} by $M\chi_\eps$ and integration with respect to $v$ gives
$$
  \|\chi_\eps\|_{L^2(M\,dv)}^2 - \frac{\eps}{2}\nabla_x\cdot \int_{\R^d} vM\chi_\eps^2 dv = \varphi\int_{\R^d} M\chi_\eps dv
  \le |\varphi| \,\|\chi_\eps\|_{L^2(M\,dv)} \,,
$$
where the Cauchy-Schwarz inequality and the normalization of $M$ has been used. Integration with respect to $x$ and $t$,
the divergence theorem, and the boundary condition \eqref{auxiliarybc} for $\chi_\eps$ lead to
$$
  \|\chi_\eps\|_{L^2(M\,dx\,dv\,dt)}^2 - \frac{\eps}{2}\int_0^\infty \int_{\Gamma^-} \nu\cdot vM\chi_\eps^2 dv\,d\sigma\,dt 
  \le \|\varphi\|_{L^2(dx\,dt)} \|\chi_\eps\|_{L^2(M\,dx\,dv\,dt)} \,,
$$
completing the proof of the first inequality. The proof of the second is analogous after differentiation of \eqref{auxiliaryeq}
 with respect to $t$.
\end{proof}
 
\section{Proof of Theorem \ref{maintheo}}

With $\varphi \in \mathcal{D}_\Omega$ and $\chi_\eps$ defined by \eqref{auxiliarysol},
multiplication of \eqref{maineq} by $\chi_\eps$ and integration with respect to $x$, $v$ and $t$ gives
\begin{align}
& - \int_0^\infty \int_{\R^d} \int_\Omega f_\eps \partial_t \chi_\eps dx\,dv\,dt 
    -  \int_{\R^d} \int_\Omega \rho^{ in}M \chi_\eps(t=0) dx\,dv \nonumber \\
& \qquad \qquad \qquad \qquad \qquad = \eps^{-\alpha} \int_0^\infty \int_{ \R^d} \int_\Omega (\rho_\eps M \chi_\eps 
   - f_\eps \chi_\eps + f_\eps \eps v \cdot \nabla_x \chi_\eps) dx\,dv\,dt \nonumber \\
& \qquad \qquad \qquad \qquad \qquad = \int_0^\infty \int_\Omega \rho_\eps \left( \eps^{-\alpha} \int_{ \R^d} M    
   (\chi_\eps - \varphi) dv \right) dx\,dt \,. \label{weakform}
\end{align}

 \noindent In the sequel we shall need the following notation: For $x,y \in \R^d$ we denote by $[x,y]$ the line segment 
 connecting $x$ and $y$. Furthermore, we denote by $\mathcal{S}_\Omega(x)$ the largest star shaped subdomain of $\Omega$
 with center $x$, i.e. 
 $$
   \mathcal{S}_\Omega(x) := \{y\in\Omega:\, [x,y]\subset\Omega\}
$$
The heart of our analysis is the asymptotics for the term in parantheses on the right hand side of \eqref{weakform}.
 
\begin{lemma}\label{mainlemma}
 Let $\varphi\in\mathcal{D}_\Omega$ and let $\chi_\varepsilon$ be given by \eqref{auxiliarysol}. Then
 \begin{equation}\label{bterm}
  \lim_{\eps\to 0} \varepsilon^{ - \alpha} \int_{ \R^d} M (\chi_\eps - \varphi) dv
  =  -h_\alpha \varphi +  \mathcal{ L}_\alpha ( \varphi)
 \end{equation}
locally uniformly in $x$ and $t$, where 
 \[
  h_\alpha ( x) = \int_{ \R^d} \frac{ 1}{ |v|^{ d + \alpha}} \e^{ - \frac{ | x - x_0 ( x, v)|}{ |v|}} dv \,,
 \]
 \[
  \mathcal{ L}_\alpha ( \varphi)(x,t) = \Gamma(\alpha+1)\,\emph{P.V.} \int_{\mathcal{S}_\Omega(x)} 
    \frac{ \varphi (y,t) - \varphi (x,t)}{ |y-x|^{ d + \alpha}} dy \,.
 \]
\end{lemma}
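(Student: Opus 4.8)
The plan is to start from the representation \eqref{auxiliarysol1}, which splits the left-hand side of \eqref{bterm} as
\begin{equation*}
\varepsilon^{-\alpha}\int_{\R^d}M(\chi_\varepsilon-\varphi)\,dv=-\varphi(x,t)\,H^\varepsilon(x)+L^\varepsilon(\varphi)(x,t),
\end{equation*}
with $H^\varepsilon(x)=\varepsilon^{-\alpha}\int_{\R^d}M(v)e^{-r(x,v)/\varepsilon}\,dv$ and $L^\varepsilon(\varphi)(x,t)=\varepsilon^{-\alpha}\int_{\R^d}M(v)\int_0^{r(x,v)/\varepsilon}e^{-s}[\varphi(x+\varepsilon sv,t)-\varphi(x,t)]\,ds\,dv$, and to prove separately that $H^\varepsilon\to h_\alpha$ and $L^\varepsilon(\varphi)\to\mathcal{L}_\alpha(\varphi)$, locally uniformly. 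For $H^\varepsilon$ I would substitute $v=w/\varepsilon$. Since the exit point $x_0(x,v)$ depends only on the direction of $v$, one gets $r(x,w/\varepsilon)/\varepsilon=|x-x_0(x,w)|/|w|$, while $\varepsilon^{-\alpha}\varepsilon^{-d}M(w/\varepsilon)=|w|^{-d-\alpha}$ as soon as $|w|\ge\varepsilon$; on the complementary ball $|w|<\varepsilon$ one has $|x-x_0(x,w)|\ge\delta(x)$, so that region contributes at most $C\varepsilon^{-\alpha}e^{-\delta(x)/\varepsilon}$, which vanishes uniformly on compact subsets of $\Omega$. Monotone convergence in the remaining integral $\int_{|w|\ge\varepsilon}|w|^{-d-\alpha}e^{-|x-x_0(x,w)|/|w|}\,dw$ then yields $H^\varepsilon\to h_\alpha$; the limit is finite in the interior of $\Omega$ because the exponential absorbs the singularity at $w=0$ and $\alpha>0$ makes the tail integrable.

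The decisive step for the harder term $L^\varepsilon(\varphi)$ is a change of variables that removes $\varepsilon$ from the argument of $\varphi$: by Fubini the double integral runs over $\{(v,s):0<s<r(x,v)/\varepsilon\}$, and for fixed $s>0$ the dilation $v\mapsto w=\varepsilon sv$ has $dv=(\varepsilon s)^{-d}dw$, turns $\varphi(x+\varepsilon sv)$ into $\varphi(x+w)$, turns the constraint $s<r(x,v)/\varepsilon$ into $|w|<|x-x_0(x,w)|$, i.e.\ $x+w\in\mathcal{S}_\Omega(x)$, and on $\{|w|\ge\varepsilon s\}$ gives $\varepsilon^{-\alpha}(\varepsilon s)^{-d}M(w/(\varepsilon s))=s^\alpha|w|^{-d-\alpha}$, with no remaining dependence on $\varepsilon$. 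Hence
\begin{equation*}
L^\varepsilon(\varphi)(x,t)=\int_0^\infty e^{-s}s^\alpha\Big(\int_{(\mathcal{S}_\Omega(x)-x)\cap\{|w|\ge\varepsilon s\}}\frac{\varphi(x+w,t)-\varphi(x,t)}{|w|^{d+\alpha}}\,dw\Big)\,ds+\widetilde{R}^\varepsilon(\varphi)(x,t),
\end{equation*}
where $\widetilde{R}^\varepsilon$ collects the contribution of $\{|w|<\varepsilon s\}$. For each fixed $s>0$ the inner integral converges, as $\varepsilon\to0$, to $\mathrm{P.V.}\int_{\mathcal{S}_\Omega(x)-x}|w|^{-d-\alpha}(\varphi(x+w,t)-\varphi(x,t))\,dw$; using $\int_0^\infty e^{-s}s^\alpha\,ds=\Gamma(\alpha+1)$, passing to the limit $\varepsilon\to0$ and reverting to $y=x+w$ produces exactly $\mathcal{L}_\alpha(\varphi)(x,t)$.

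The main obstacle — and the only place where the smoothness of $\varphi$ and the bound $\alpha<2$ are really needed — is to justify these two limits when $\alpha\in[1,2)$, for then the integral defining $\mathcal{L}_\alpha(\varphi)$ is only a principal value. The remedy is uniform control near $w=0$: whenever $|w|<\delta(x)$ one has $[x,x+w]\subset B_{\delta(x)}(x)\subset\Omega$, so the domain of integration is a full ball or a symmetric annulus, on which the odd term $w\cdot\nabla_x\varphi(x)$ integrates to zero and the Taylor remainder of $\varphi$ is $O(|w|^2)$; consequently $\int_{\{\eta\le|w|<\delta(x)\}}|w|^{-d-\alpha}(\varphi(x+w,t)-\varphi(x,t))\,dw$ is bounded by $C\|\varphi\|_{C^2}\delta(x)^{2-\alpha}$ uniformly in $\eta\ge0$, which is finite precisely because $\alpha<2$, while outside $B_{\delta(x)}$ one uses $|\varphi(x+w,t)-\varphi(x,t)|\le2\|\varphi\|_\infty$ together with the integrability of $|w|^{-d-\alpha}$ at infinity. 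These two estimates make the bracketed inner integral above bounded uniformly in $(\varepsilon,s)$, which legitimizes dominated convergence in $s$ against the integrable weight $e^{-s}s^\alpha$; the same Taylor-plus-symmetry argument applied on the set $\varepsilon s<\delta(x)$ gives $\widetilde{R}^\varepsilon(\varphi)=O(\varepsilon^{2-\alpha})+O(\varepsilon^{-\alpha}e^{-\delta(x)/\varepsilon})\to0$. Since every constant appearing is a fixed multiple of $\|\varphi\|_{C^2}$ times a negative power of $\delta(x)$ or an exponentially small factor, all estimates are uniform on compact subsets of $\Omega\times[0,\infty)$, which yields the claimed local uniformity in $x$ and $t$.
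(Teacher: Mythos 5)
Your argument is essentially the paper's: the same splitting of $\eps^{-\alpha}\int_{\R^d} M(\chi_\eps-\varphi)\,dv$ into $-\varphi\, h_\alpha^\eps + \mathcal{L}_\alpha^\eps(\varphi)$ coming from \eqref{auxiliarysol1}, the same scaling substitution ($w=\eps v$, resp.\ $w=\eps s v$) which turns the constraint $s<r(x,v)/\eps$ into $x+w\in\mathcal{S}_\Omega(x)$, and the same symmetry-plus-Taylor cancellation on balls contained in $\mathcal{S}_\Omega(x)$ (radius less than $\delta(x)$) to give meaning to the principal value when $\alpha\ge 1$; your computations are correct and they do prove the lemma as phrased, i.e.\ locally uniformly in $(x,t)$. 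The one substantive difference is how uniformity is handled: you only invoke $\|\varphi\|_{C^2}$ and $\|\varphi\|_\infty$, so your error terms degenerate near $\partial\Omega$ (bounds of the form $\delta(x)^{-\alpha}$ and $\eps^{-\alpha}e^{-\delta(x)/\eps}$) and you obtain uniformity only on compact subsets of $\Omega$, whereas the paper exploits the defining property of $\mathcal{D}_\Omega$, namely $|\varphi|\le c\,\delta(x)^2$ and consequently $|\nabla_x\varphi|\le c\,\delta(x)$, to make every remainder uniform up to the boundary with explicit rates $O(\eps^{2-\alpha})$ (resp.\ $O(\eps\log(1/\eps))$ for $\alpha=1$); that stronger form is what Corollary \ref{cor} (uniformity on ${\rm supp}\,\varphi$, which may touch $\partial\Omega$) and the limit passage in \eqref{weakform} actually rely on, so your version, while sufficient for the literal statement, buys less downstream. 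A smaller presentational point: dominated convergence in $s$ by itself only yields pointwise convergence in $x$; to justify the claimed local uniformity you should use the quantitative truncation estimate your own bounds already provide (for $\eps s<\delta(x)$ the excised set $B_{\eps s}(x)$ is symmetric, the odd term cancels and the Taylor remainder is $O((\eps s)^{2-\alpha})$, while the range $s>\delta(x)/\eps$ is exponentially small on compacts), rather than the qualitative convergence statement alone.
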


\begin{proof} 
The representation \eqref{auxiliarysol1} of $\chi_\eps$ induces the splitting
$$
  \eps^{-\alpha} \int_{ \R^d} M ( \chi_\eps - \varphi)d v  = -h_\alpha^\eps \varphi + \mathcal{ L}_\alpha^\eps ( \varphi) \,,
$$
with
\begin{eqnarray*}
h_\alpha^\eps(x) &=& -\eps^{-\alpha}\int_{ \R^d} M(v) e^{-r(x,v)/\eps}d v \,,\\
\mathcal{ L}_\alpha^\eps ( \varphi)(x,t) &=& \eps^{-\alpha}\int_{ \R^d} \int_0^{ r( x, v)/\eps} M(v)e^{ -s} [ \varphi ( x + \eps sv, t) - \varphi ( x, t)] ds\,d v \,.
\end{eqnarray*}
We shall consider these parts separately. In both cases we shall start by proving that the small velocities do not contribute to the
limit. This splits the rest of the proof into 4 steps.
\medskip

\noindent{\it Step 1:} We consider the contribution to $h_\alpha^\eps$ coming from the small velocities. For $|v|\le 1$ we 
have $r(x,v) \ge \delta(x)$. Therefore
$$
  \eps^{-\alpha}\int_{ |v|\le 1} M(v) e^{-r(x,v)/\eps}d v \le \eps^{-\alpha} e^{-\delta(x)/\eps} 
  \le c\frac{\eps^{2-\alpha}}{\delta(x)^2}\,,
$$
since the map $z\mapsto z^2 e^{-z}$, $z\ge 0$, is bounded.
\medskip

\noindent{\it Step 2:} The previous step implies that $h_\alpha^\eps$ is asymptotically equivalent to
\begin{equation} \label{step2}
\eps^{-\alpha}\int_{ |v| > 1} |v|^{-d-\alpha} e^{-r(x,v)/\eps}d v \,.
\end{equation}
In this integral we make the coordinate transformation $w=\eps v$. Observing that 
$$
   \frac{r(x,w/\eps)}{\eps} = \frac{|x-x_0(x,w/\eps)|}{|w|} = r(x,w) \,,
$$
since $x_0(x,w/\eps) = x_0(x,w)$, the expression in \eqref{step2} is equal to
$$
  \int_{|w|> \eps} |w|^{-d-\alpha} e^{-r(x,w)} dw \,.
$$
For proving that this converges to $h_\alpha(x)$, we need to estimate
\begin{eqnarray*}
  \int_{|w|\le \eps} |w|^{-d-\alpha} e^{-r(x,w)} dw &\le& \int_{|w|\le \eps} |w|^{-d-\alpha} e^{-\delta(x)/|w|} dw
  = |S^d| \delta(x)^{-\alpha} \int_{\delta(x)/\eps}^\infty s^{\alpha-1}e^{-s}ds \\
  &\le&  |S^d| \frac{\eps^{2-\alpha}}{\delta(x)^2} 
  \sup_{\gamma\ge 0} \left(\gamma^{2-\alpha} \int_\gamma^\infty s^{\alpha-1}e^{-s}ds \right)\,.
\end{eqnarray*}
The supremum is finite since the integrand is bounded and decays exponentially as $s\to\infty$.

Combining this result with Step 1 shows that
$$
  |h_\alpha^\eps(x) - h_\alpha(x)| \le c\frac{\eps^{2-\alpha}}{\delta(x)^2} \,,
$$
implying pointwise convergence of $h_\alpha^\eps$ to $h_\alpha$ in $\Omega$. Since $|\varphi(x,t)| \le c\,\delta(x)^2$, 
the convergence of $h_\alpha^\eps\varphi$ to $h_\alpha\varphi$ is uniform in $(x,t)$.
\medskip

\noindent{\it Step 3:} We analyze the contributions from the small velocities to $\mathcal{ L}_\alpha^\eps ( \varphi)$. 
For the test function difference, we apply the Taylor expansion:
\begin{eqnarray*}
 && \left|\eps^{-\alpha}\int_{|v|\le 1} \int_0^{ r( x, v)/\eps} M(v)e^{ -s} \left( \eps s v\cdot\nabla_x\varphi (x, t) 
 + \frac{\eps^2 s^2}{2} v^{tr}\nabla_x^2\varphi (\hat x, t)v \right) ds\,d v \right| \\
 &&\le \left|\eps^{1-\alpha}\nabla_x\varphi (x, t)\cdot\int_{|v|\le 1} vM(v)\int_0^{ r( x, v)/\eps} se^{ -s} 
  ds\,d v \right| + \eps^{2-\alpha}c \int_{|v|\le 1} |v|^2 M(v) dv \int_0^\infty s^2 e^{-s}ds \,.
\end{eqnarray*}
In the first term on the right hand side we change the order of integration:
\begin{eqnarray*}
  &&\int_{|v|\le 1} vM(v)\int_0^{ r( x, v)/\eps} s e^{ -s} ds\,dv 
  = \int_0^\infty se^{ -s}  \int_{|v|\le 1,\, \eps s\le r(x,v)} vM(v) dv\,ds \\
  && = \int_0^{\delta(x)/\eps} s e^{ -s}  \int_{|v|\le 1} vM(v) dv\,ds +
  \int_{\delta(x)/\eps}^\infty se^{ -s}  \int_{|v|\le 1,\, \eps s\le r(x,v)} vM(v) dv\,ds
\end{eqnarray*}
In the first term on the right hand side, the restriction $\eps s\le r(x,v)$ can be omitted, since it is automatically satisfied
for $\eps s \le \delta(x) \le r(x,v)$. As a consequence this term vanishes by $M$ being even.
The last term can be estimated by 
$$
  \int_{\delta(x)/\eps}^\infty se^{-s}ds  \int_{|v|\le 1} |v|M(v) dv \le c\frac{\eps}{\delta(x)} 
  \sup_{\gamma\ge 0} \left( \gamma \int_\gamma^\infty s e^{-s}ds\right) \,.
$$
Since $\varphi\in\mathcal{D}_\Omega$ implies $|\nabla_x\phi(x,t)|\le c\delta(x)$, we have the result
$$
  \eps^{-\alpha}\int_{|v|\le 1} \int_0^{ r( x, v)/\eps} M(v)e^{ -s} [ \varphi ( x + \eps sv, t) - \varphi ( x, t)] ds\,dv 
  = O(\eps^{2-\alpha}) \,,
$$
uniformly in $(x,t)$.
\medskip

\noindent{\it Step 4:} It remains to consider
\begin{eqnarray}
  && \eps^{-\alpha}\int_{|v|>1} \int_0^{ r( x, v)/\eps} |v|^{-d-\alpha} e^{ -s} [ \varphi ( x + \eps sv, t) - \varphi ( x, t)] ds\,dv 
  \nonumber\\
  && = \int_{|w|>\eps} \int_0^{ r( x,w)} |w|^{-d-\alpha} e^{ -s} [ \varphi ( x + sw, t) - \varphi ( x, t)] ds\,dw \nonumber\\
  && = \int_0^\infty s^{d+\alpha}e^{-s}\int_{|w|>\eps,\,s<r(x,w)}  \frac{\varphi ( x + sw, t) - \varphi ( x, t)}{|sw|^{d+\alpha}} dw\,ds
  \label{step4}
\end{eqnarray}
By the coordinate transformation $x+sw=y$ the condition $s<r(x,w)$ becomes $s$-independent: 
$$
|x-y|<|x-x_0(x,y-x)| \quad\Longleftrightarrow\quad y\in \mathcal{S}_\Omega(x) \,.
$$
Therefore \eqref{step4} is equal to
$$
  \int_0^\infty s^\alpha e^{-s}\int_{\mathcal{S}_\Omega(x)\setminus B_{\eps s}(x)}  
  \frac{\varphi (y,t) - \varphi(x,t)}{|y-x|^{d+\alpha}} dy\,ds \,,
$$
where $B_r(x)$ denotes the ball with center $x$ and radius $r$. In order to prove that this converges to 
$\mathcal{ L}_\alpha ( \varphi)$, we need to show that
$$
  \int_0^\infty s^\alpha e^{-s}\int_{\mathcal{S}_\Omega(x)\cap B_{\eps s}(x)}  
  \frac{(y-x)\cdot\nabla_x\varphi(x,t) + (y-x)^{tr}\nabla_x^2\varphi(\hat x,t)(y-x)/2}{|y-x|^{d+\alpha}} dy\,ds
$$
tends to zero. The second term involving the Hessian of the test function can be estimated by
$$
  c \int_0^\infty s^\alpha e^{-s} \int_{B_{\eps s}(x)} |y-x|^{2-d-\alpha} dy\,ds = c\,\eps^{2-\alpha} \int_0^\infty s^2 e^{-s}ds \,.
$$
The estimation of the first term is more subtle. Actually, the integral with respect to $y$ has to be understood as a principal value
for $\alpha\ge 1$. Since
$$
  {\rm P.V.} \int_{B_r(x)} \frac{y-x}{|y-x|^{d+\alpha}} dy = 0 \,,\qquad \mbox{for } r>0 \,,
$$
and $B_{\eps s}(x) \subset \mathcal{S}_\Omega(x) $ for $\eps s < \delta(x)$, we have
\begin{eqnarray}
  && \int_0^\infty s^\alpha e^{-s} {\rm P.V.} \int_{\mathcal{S}_\Omega(x)\cap B_{\eps s}(x)}  
  \frac{(y-x)\cdot\nabla_x\varphi(x,t)}{|y-x|^{d+\alpha}} dy\,ds \label{step5}\\
  &&= \int_{\delta(x)/\eps}^\infty s^\alpha e^{-s} \int_{(\mathcal{S}_\Omega(x)\cap B_{\eps s}(x))\setminus B_{\delta(x)}}  
  \frac{(y-x)\cdot\nabla_x\varphi(x,t)}{|y-x|^{d+\alpha}} dy\,ds \,,\nonumber
\end{eqnarray}
which can be estimated by
\begin{eqnarray*}
   c\delta(x) \int_{\delta(x)/\eps}^\infty s^\alpha e^{-s} \int_{B_{\eps s}(x)\setminus B_{\delta(x)}}  
  |y-x|^{1-d-\alpha} dy\,ds 
   = c\delta(x) \int_{\delta(x)/\eps}^\infty s^\alpha e^{-s} \int_{\delta(x)}^{\eps s} r^{-\alpha} dr\, ds \,.
\end{eqnarray*}
With
$$
  \int_{\delta(x)}^{\eps s} r^{-\alpha} dr \le \left\{ \begin{array}{ll} c(\eps s)^{1-\alpha} \,, & \alpha < 1 \,,\\
               \log(\eps s/\delta(x)) \,, & \alpha=1\,,\\
               c\delta(x)^{1-\alpha} \,,& \alpha>1 \,,\end{array} \right.
$$
it is straightforward to obtain that \eqref{step5} is $O(\eps^{2-\alpha})$ for $\alpha\ne 1$ and $O(\eps\log(1/\eps))$ for 
$\alpha=1$, uniformly in $(x,t)$. This completes the proof of the uniform convergence of $\mathcal{L}_\alpha^\eps(\varphi)$
to $\mathcal{L}_\alpha(\varphi)$.
\end{proof}

 \begin{cor}\label{cor}
 Let $\varphi \in \mathcal{D}_\Omega$ and let $\chi_\eps$ be defined by \eqref{auxiliarysol}. Then
 \begin{equation}\label{keyestimate3}
 \lim_{\eps\to 0}\int_{ \R^d} M( v) [ \chi_\varepsilon ( x, v, t) - \varphi ( x, t)] d v = 
 \lim_{\eps\to 0}\int_{ \R^d} M( v) [ \partial_t \chi_\varepsilon ( x, v, t) - \partial_t \varphi ( x, t)] d v = 0, 
 \end{equation}
uniformly with respect to $(x,t)\in{\rm supp}(\varphi)$.
\end{cor}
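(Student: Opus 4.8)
The plan is to read off Corollary \ref{cor} directly from Lemma \ref{mainlemma}: writing
$\int_{\R^d}M(\chi_\eps-\varphi)\,dv=\eps^\alpha\bigl(\eps^{-\alpha}\int_{\R^d}M(\chi_\eps-\varphi)\,dv\bigr)$
and letting $\eps\to0$, the factor in parentheses converges by Lemma \ref{mainlemma}, so the whole quantity is $\eps^\alpha$
times something bounded and therefore tends to $0$. The one point that needs care is the asserted uniformity on
${\rm supp}(\varphi)$, a set that may reach $\partial\Omega$, whereas Lemma \ref{mainlemma} only claims local uniform
convergence in $\Omega\times[0,\infty)$ and its limit $-h_\alpha\varphi+\mathcal{L}_\alpha(\varphi)$ involves $h_\alpha$,
which blows up at the boundary.

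To deal with this I would observe that the estimates carried out in Steps 1--4 of the proof of Lemma \ref{mainlemma} are
in fact uniform on all of ${\rm supp}(\varphi)$: there each boundary-layer term is bounded by a power of $\eps$ times
$\delta(x)^{-2}$ or $\delta(x)^{-1}$, multiplied against the factors $|\varphi(x,t)|\le c\,\delta(x)^2$ and
$|\nabla_x\varphi(x,t)|\le c\,\delta(x)$ available for $\varphi\in\mathcal{D}_\Omega$, so that after cancellation one even
obtains the rate
\begin{equation*}
 \Bigl|\,\eps^{-\alpha}\!\int_{\R^d}M(\chi_\eps-\varphi)\,dv-\bigl(-h_\alpha\varphi+\mathcal{L}_\alpha(\varphi)\bigr)\Bigr|
  \le c\,\eps^{2-\alpha}\ \ (\alpha\neq1),\qquad \le c\,\eps\log(1/\eps)\ \ (\alpha=1),
\end{equation*}
uniformly in $(x,t)$. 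Moreover the limit is bounded on ${\rm supp}(\varphi)$: one may estimate it directly from the single
representation \eqref{def:our-fracLap}, splitting the $y$-integral at radii $\delta(x)$ and $1$ about $x$ and using
$|\varphi(x)|\le c\,\delta(x)^2$, $|\varphi(y)|\le c\,|y-x|^2$ for $|y-x|\ge\delta(x)$, and the vanishing of the odd part
of the integrand over $B_{\delta(x)}(x)\subset\mathcal{S}_\Omega(x)$; alternatively one simply notes that the displayed
rate together with the trivial bound $\|\chi_\eps\|_{L^\infty}\le\|\varphi\|_{L^\infty}$ forces the limit to be bounded.

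It follows that $\eps^{-\alpha}\int_{\R^d}M(\chi_\eps-\varphi)\,dv$ is bounded uniformly in $(x,t)\in{\rm supp}(\varphi)$
and in all sufficiently small $\eps$, so multiplying by $\eps^\alpha$ yields the first identity in \eqref{keyestimate3}
(with rate $O(\eps^2)$ for $\alpha\neq1$ and $O(\eps^{1+\alpha}\log(1/\eps))$ for $\alpha=1$). For the second identity I
would use that $r(x,v)$ in \eqref{auxiliarysol} is independent of $t$, so that $\partial_t\chi_\eps$ is given by exactly
the formulas \eqref{auxiliarysol}--\eqref{auxiliarysol1} with $\varphi$ replaced by $\partial_t\varphi$; and
$\partial_t\varphi\in\mathcal{D}_\Omega$ as well, since $\varphi\in\mathcal{D}_\Omega$ forces $\varphi$ and $\nabla_x\varphi$
to vanish on $\partial\Omega$ (hence so do $\partial_t\varphi$ and $\nabla_x\partial_t\varphi$), which gives
$|\partial_t\varphi|\le c\,\delta^2$ by Taylor expansion. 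Applying the argument above with $\partial_t\varphi$ in place of
$\varphi$ finishes the proof.

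The genuinely delicate step is precisely this uniformity up to $\partial\Omega\cap{\rm supp}(\varphi)$, where $h_\alpha$ is
unbounded: it goes through only because belonging to $\mathcal{D}_\Omega$ encodes the quadratic flatness
$|\varphi|\le c\,\delta^2$, $|\nabla_x\varphi|\le c\,\delta$, which turns every boundary-singular error term in the proof of
Lemma \ref{mainlemma} into a quantity of size $O(\delta(x)^{2-\alpha})=O(1)$. The rest is just a rescaling of
Lemma \ref{mainlemma}.
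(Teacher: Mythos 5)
Your argument is correct and follows essentially the same route as the paper, which also deduces the first limit directly from Lemma \ref{mainlemma} (whose error estimates are, as you note, uniform in $(x,t)$ thanks to $|\varphi|\le c\,\delta^2$, $|\nabla_x\varphi|\le c\,\delta$) and obtains the second by observing that $\partial_t\chi_\eps$ is the solution of \eqref{auxiliaryeq}--\eqref{auxiliarybc} with data $\partial_t\varphi\in\mathcal{D}_\Omega$; your extra care about boundedness of $-h_\alpha\varphi+\mathcal{L}_\alpha(\varphi)$ on ${\rm supp}(\varphi)$ is exactly what makes the paper's ``immediate consequence'' rigorous. The only slip is the parenthetical rate: since the leading term $\eps^\alpha\bigl(-h_\alpha\varphi+\mathcal{L}_\alpha(\varphi)\bigr)$ is generically nonzero, the convergence in \eqref{keyestimate3} is $O(\eps^{\alpha})$ rather than $O(\eps^{2})$, which does not affect the conclusion.
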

 
 \begin{proof}
 The first statement is an immediate consequence of Lemma \ref{mainlemma}. The second statement follows, since 
 $\varphi \in \mathcal{D}_\Omega$ implies $\partial_t\varphi \in \mathcal{D}_\Omega$ and since the map 
 $\partial_t\varphi\mapsto\partial_t\chi_\eps$ is the same as  $\varphi\mapsto\chi_\eps$.
 \end{proof}
 
 The remaining steps in the proof of Theorem \ref{maintheo} are rather standard. As a consequence of Lemma \ref{lem:uniform}
 and of the estimate 
 $$
    |\rho_\eps| \le \|f_\eps\|_{L^2(dv/M)} \qquad\Longrightarrow\qquad \|\rho_\eps\|_{L^2(dx)} \le \|f_\eps\|_{L^2(dx\,dv/M)} \,,
 $$
 we obtain
 $$
 \rho_\eps\stackrel{*}{\rightharpoonup}\rho \quad\mbox{in } L^\infty(dt;\,L^2(dx)) \,,\qquad
 f_\eps\stackrel{*}{\rightharpoonup}\rho M \quad\mbox{in } L^\infty(dt;\,L^2(dx\,dv/M)) \,,
 $$
when restricting to subsequences. Now we are ready for passing to the limit in \eqref{weakform}. We decompose the first term by using
$$
  \int_{\R^d} f_\eps\partial_t\chi_\eps dv = \int_{\R^d} (f_\eps - \rho_\eps M)\partial_t\chi_\eps dv 
  + \rho_\eps\int_{\R^d} M\partial_t\chi_\eps dv \,.
$$
The first term on the right hand side tends to zero by
$$
  \left| \int_{\R^d} (f_\eps - \rho_\eps M)\partial_t\chi_\eps dv \right| 
    \le \|f_\eps - \rho_\eps M\|_{L^2(dv/M)} \|\partial_t\chi_\eps\|_{L^2(M\,dv)} \,,
$$
Lemma \ref{lem:uniform}, and Lemma \ref{lem:phi}. In the second term we may pass to the limit $\rho\,\partial_t\varphi$
by the weak$*$ convergence of $\rho_\eps$ and the strong convergence of $\int_{\R^d} M\partial_t\chi_\eps dv$
(Corollary \ref{cor}). The limit in the second term of \eqref{weakform} is a consequence of Corollary \ref{cor}. Finally, 
passing to the limit in the right hand side of \eqref{weakform} is justified by the weak$*$ convergence of $\rho_\eps$ and
by Lemma \ref{mainlemma}. This completes the proof of Theorem \ref{maintheo}.
 
\section{Discussion}\label{sec:disc} 
  
In this section we discuss properties of the fractional diffusion operator. First we show that the function $h_\alpha$
defined in \eqref{eq:hfunc} is well defined and tends to infinity at the boundary of $\Omega$.
\begin{proposition}\label{prop:hbound}
 Let $h_\alpha$ be defined by \eqref{eq:hfunc}, then there exists $C > 0$ such that
 \begin{equation}\label{hbeha}
  0< h_\alpha ( x) \leq C \delta(x)^{ - \alpha} \,,\qquad x\in\Omega \,.
 \end{equation}
 \end{proposition}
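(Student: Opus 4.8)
The plan is to establish the upper bound $h_\alpha(x)\le C\delta(x)^{-\alpha}$ by splitting the $v$-integral in \eqref{eq:hfunc} according to the size of $|v|$ relative to $\delta(x)$, and to obtain positivity essentially for free since the integrand is strictly positive. Write $\delta=\delta(x)$ for brevity. For directions $v$ with $|v|\le 1$ the ray from $x$ must travel at least a distance $\delta$ before hitting $\partial\Omega$, so $|x-x_0(x,v)|\ge\delta$ and hence $|x-x_0(x,v)|/|v|\ge\delta/|v|$; thus the exponential factor is at most $\e^{-\delta/|v|}$. For $|v|>1$ we simply bound the exponential by $1$ and note $\int_{|v|>1}|v|^{-d-\alpha}\,dv=|S^{d-1}|/\alpha<\infty$, a finite contribution independent of $x$ (in fact bounded, hence certainly $\le C\delta^{-\alpha}$ when $\delta$ is bounded above on $\Omega$; one may absorb the bounded diameter of $\Omega$ into the constant).

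The main work is the small-velocity piece $\int_{|v|\le 1}|v|^{-d-\alpha}\e^{-\delta/|v|}\,dv$. Passing to polar coordinates $v=r\omega$, this equals $|S^{d-1}|\int_0^1 r^{-1-\alpha}\e^{-\delta/r}\,dr$, and the substitution $s=\delta/r$ (so $r=\delta/s$, $dr=-\delta s^{-2}\,ds$) turns it into
\[
  |S^{d-1}|\,\delta^{-\alpha}\int_\delta^\infty s^{\alpha-1}\e^{-s}\,ds
  \;\le\; |S^{d-1}|\,\delta^{-\alpha}\,\Gamma(\alpha)\,,
\]
which is exactly of the claimed form. This is the same computation already carried out in Step 2 of the proof of Lemma \ref{mainlemma}, so I would simply invoke that estimate. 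Combining the two pieces gives $h_\alpha(x)\le |S^{d-1}|\bigl(\Gamma(\alpha)+1/\alpha\bigr)\delta(x)^{-\alpha}$ on the set where $\delta(x)\le 1$; for $\delta(x)$ bounded away from zero the bound is trivial by monotonicity of the right-hand side in a compact set, after enlarging $C$. The strict positivity $h_\alpha(x)>0$ is immediate: the integrand $|v|^{-d-\alpha}\e^{-|x-x_0(x,v)|/|v|}$ is strictly positive for every $v\neq 0$ (note $|x-x_0(x,v)|<\infty$ since $\Omega$ is bounded, so the exponential never vanishes), and it is locally integrable near $v=0$ precisely because of the exponential decay just exploited, so the integral is a finite positive number.

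I do not anticipate a genuine obstacle here; the one point requiring a little care is making sure the estimate $|x-x_0(x,v)|\ge\delta(x)$ is legitimate, i.e. that the ray from $x$ in direction $v$ actually meets $\partial\Omega$ so that $x_0(x,v)$ is well defined. Since $\Omega$ is bounded, every ray from an interior point does exit $\Omega$ and hence crosses $\partial\Omega$; and the first crossing point is at Euclidean distance at least $\mathrm{dist}(x,\partial\Omega)=\delta(x)$ from $x$, which is all that is used. The finiteness and well-definedness of $h_\alpha$ claimed in the sentence following Theorem \ref{maintheo} then follow as a byproduct of the same splitting.
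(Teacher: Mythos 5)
Your proof is correct and takes essentially the same route as the paper: the key estimate $|x-x_0(x,v)|\ge\delta(x)$, polar coordinates, and the rescaling that turns the radial integral into $\delta(x)^{-\alpha}$ times a finite Gamma-type constant. The only deviation is cosmetic: you split off $|v|>1$ and bound the exponential there by $1$ (absorbing the resulting constant into $C$ via the boundedness of $\Omega$), whereas the paper applies the bound $e^{-|x-x_0(x,w)|/|w|}\le e^{-\delta(x)/|w|}$ on all of $\R^d$ and rescales once, obtaining the bound uniformly without that extra absorption step.
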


\begin{proof}
In order to prove \eqref{hbeha} let us chose $x \in \Omega$ and note that $ | x - x_0 ( x, w)| \geq \delta ( x)$. Next, let us introduce a polar
coordinates change of variables in the integral \eqref{eq:hfunc}, and note the following:
\[
  h_\alpha ( x) = \int_0^{ 2 \pi} \int_0^\infty \frac{ 1}{ \eta^{ d + \alpha}} \e^{ - | x - x_0 ( x, \sigma)| / \eta} \eta^{ d - 1} \d \eta \d \sigma
\]
where $\eta$ denotes the radial variable.
Now, introducing the change of variables $r = \delta ( x) \eta$ we obtain
\begin{align}
 h_\alpha ( x)  & \leq \int_0^{ 2 \pi} \int_0^\infty \frac{ 1}{ r^{ d + \alpha}} \e^{ - \delta ( x) / r} r^{ d - 1} \d r \d \sigma \nonumber \\ 
           & \leq \int_0^{ 2 \pi} \int_0^\infty \frac{ 1}{ ( \delta ( x) \eta )^{ 1 + \alpha}} \e^{ - 1 / \eta} \delta ( x) \d \eta \d \sigma \nonumber \\
           & = \frac{ 1}{ \delta^\alpha ( x)} \int_0^{ 2 \pi} \int_0^\infty \frac{ 1}{  \eta^{ 1 + \alpha}} \e^{ - 1 / \eta} \d \eta \d \sigma, \nonumber 
\end{align}
from which \eqref{hbeha} follows. In addition, we obtain that $h_\alpha ( x)$ is finite for every $x \in \Omega$.
\end{proof}
 
In \cite{fernandez2014boundary} it has been shown that the fractional heat equation
\begin{align}
\partial_t u ( x, t) &= - c_{ d, \alpha} \, \mbox{P.V.} \int_{ \R^d} \frac{ u( x, t) - u( y, t)}{ | x - y|^{ d + \alpha}} \d y & \mbox{ in } \Omega, t > 0, \nonumber \\
u ( x, t) &= 0            & \mbox{ in } \R^d \setminus \Omega, \nonumber \\ 
u ( x, 0) &= u^{ in} ( x) & \mbox{ in } \Omega, \nonumber 
\end{align}
has a unique solution such that for any fixed $t_0 > 0$ the following estimate holds
\[
\sup_{ t \geq t_0} \bigg\| \frac{ u ( \cdot, t)}{ \delta^{ \alpha / 2} ( \cdot)} \bigg\|_{ C^\alpha ( \bar{ \Omega})} \leq C ( t_0) \| u^{ in} \|_{ L^2 ( \Omega)}.
\]
Therefore, for any fixed time $t > 0$, $u ( x, t)$ behaves like $\delta^{ \alpha / 2} ( x)$ when $x \to \partial \Omega$.

In this work we neither prove the uniqueness of weak solutions nor any H\"older regularity results, however, 
formally using
 $\varphi(x,t) = \rho(x,t){\bf 1}_{[0,T]}(t)$ in \eqref{mainfrac}  yields
\[
  \frac{1}{2}\| \rho(\cdot,T) \|^2_{ L^2 ( \Omega)} + \int_0^T \int_\Omega  h_\alpha \rho^2 dx \,dt 
  + \Gamma(\alpha+1)\int_0^T \int_{x,y:\,[x,y]\subset\Omega} \frac{(\varphi(x)-\varphi(y))^2}{|x-y|^{d+\alpha}}
   dx\,dy\,dt = \frac{1}{2}\| \rho^{in} \|^2_{ L^2 ( \Omega)}.
\]
This implies uniqueness at least formally. Also the boundedness of the second integral together with
Proposition \ref{prop:hbound} induces results on the behaviour of $\rho$ close to the boundary.
In particular for $\alpha > 1$, as a consequence of Proposition \ref{prop:hbound}, $h_\alpha$ is not integrable,
implying some decay of $\rho(x,t)$ as $\delta(x)\to 0$.
 
%\end{appendices}

\bibliographystyle{siam}

\bibliography{bibliography.bib}

\end{document}